\newcommand{\E}{\mathbb E}
\newcommand{\F}{\mathcal F}
\newcommand{\T}{\mathcal T}
\newcommand{\PP}{\mathcal P}
\newcommand{\K}{\mathcal K}
\newcommand{\A}{\mathcal A}
\newcommand{\B}{\mathcal B}
\theoremstyle{plain}
\newtheorem{theorem}{Theorem}[section]
\newtheorem{lemma}[theorem]{Lemma}
\def\@email#1#2{%
 \endgroup
 \patchcmd{\titleblock@produce}
  {\frontmatter@RRAPformat}
  {\frontmatter@RRAPformat{\produce@RRAP{*#1\href{mailto:#2}{#2}}}\frontmatter@RRAPformat}
  {}{}
}%
\begin{document}

\preprint{AIP/123-QED}

\title[Arbitrarily accurate coarse graining]{Arbitrarily accurate, nonparametric coarse graining with  Markov renewal processes and the Mori-Zwanzig formulation}
\author{David Aristoff}
\affiliation{Colorado State University, Fort Collins, CO, 80523, USA}
\thanks{Author to whom correspondence should be addressed: \url{aristoff@colostate.edu}}
\author{Mats Johnson}
\affiliation{Colorado State University, Fort Collins, CO, 80523, USA}
\author{Danny Perez}
 \affiliation{Theoretical Division T-1, Los Alamos National Laboratory, Los Alamos, NM, 87545, USA}

\date{\today}

\begin{abstract}
Stochastic dynamics, such as molecular dynamics, are important in many scientific applications. However, summarizing and analyzing the results of such simulations is often challenging, due to the high dimension in which simulations are carried out, and consequently to the very large amount of data that is typically generated.
Coarse graining is a popular technique for addressing this problem by providing compact and expressive representations. Coarse graining, however, potentially comes at the cost of accuracy, as dynamical information is in general lost when projecting the problem in a lower dimensional space. This article shows how to eliminate coarse-graining error using two key ideas. First, we represent coarse-grained dynamics as a Markov renewal process. Second, we outline a data-driven, non-parametric Mori-Zwanzig approach for computing jump times of the renewal process. Numerical tests on a small protein illustrate the method.
\end{abstract}

\maketitle


\section{Introduction}

Stochastic dynamics play a critical role in the study of complex systems across various scientific domains. Molecular dynamics (MD), for instance, simulate the motion of collections of atoms over time. MD simulations have found applications in materials science, chemistry, biology, and physics~\cite{karplus1990molecular,karplus2002molecular,hansson2002molecular,durrant2011molecular,hospital2015molecular,hollingsworth2018molecular}.Analyzing the immense volume of data generated, and navigating the high-dimensional space in which these simulations operate, creates significant challenges. Indeed, a single snapshot of an MD trajectory resides in a continuous 3$N_\mathrm{atom}$-dimensional space, with $N_\mathrm{atom}$ ranging from hundreds to billions. 
This makes model reduction highly desirable. Effective model reduction not only enhances interpretability, but also allows for upscaling results to inform higher-fidelity models.

Yet another challenge comes from metastability~\cite{chong2017path} where stochastic trajectories are confined to small regions of space for long times, punctuated by rare but fast transitions between regions. Metastability is typical in MD, where such regions might represent folded and unfolded states of a protein. In addition to MD, metastable stochastic dynamics arise in climate models~\cite{weare2009particle,webber2019practical,finkel2023data,finkel2023revealing},   granular flows~\cite{seiden2011complexity}, neural evolution~\cite{fingelkurts2004making,haldeman2005critical,hellyer2015cognitive,cordova2017disrupted,naik2017metastability,cavanna2018dynamic}, hydrodynamics~\cite{pomeau1986front}, and power networks~\cite{matthews2018simulating}, in addition to various ordinary differential equations models~\cite{duncan2002metastability,sun1999metastability,estep1994analysis,groisman2018metastability}.

Coarse-graining is a common approach for handling high dimensionality or metastability. It is based on dividing the
original high-dimensional space of {\em microstates} into a discrete set of {\em macrostates}.  Usually, 
the dynamics on the macrostates is modeled as a Continuous Time Markov Chain (CTMC)~\cite{norris1998markov} or a discrete time Markov chain (DTMC)~\cite{norris1998markov,durrett1999essentials}.
In MD, such CTMC models are called chemical reaction networks or Kinetic Monte Carlo models~\cite{angeli2009tutorial,voter2007introduction}, 
and the DTMCs are called Markov State Models~\cite{chodera2014markov}. These Markovian models offer advantages like formal simplicity, compact representation, and ease of use with ready-made algorithms like BKL~\cite{bortz1975new} or Gillespie~\cite{gillespie1977exact} 
for simulation. 

The Markov assumption underlying CTMC and DTMC models is significantly flawed if 
macrostates are not carefully chosen~\cite{husic2018markov}, if temperatures are not sufficiently low~\cite{di2016jump}, or 
if time scales are not long enough. Even with careful choices of all these parameters, some degree of
departure from exact Markovian behavior remain in general~\cite{ross1995stochastic,bremaud2001markov,lelievre2015accelerated,lelievre2020mathematical}. 

Meanwhile, recent findings show the Markov assumption can be weakened, with an arbitrarily accurate representation achievable using Markov Renewal Processes~\cite{cinlar1975exceptional} (MRPs) by simply adjusting a scalar parameter \cite{agarwal2020arbitrarily}. This scalar parameter, called $\tau$ below, is a {\em decorrelation time} chosen to allow the underlying dynamics to periodically reach local equilibrium in the macrostates, inheriting the Markov property at each such time. MRPs differ from Markov processes in only having the Markov property at certain times (called {\em jump times}).
 Despite having formal simplicity, a complete MRP parametrization for $N$ macrostates would require $N^2$ scalars and $N^2$ functions of time. Accurately representing these functions from limited, short-time length data poses a challenge~\cite{agarwal2020arbitrarily}. This article proposes a new technique, rooted in first principles, to efficiently model this MRP with a few $N \times N$ matrices.

\subsection{Contributions}

Below, we propose a compact, data-driven parametrization for the MRP 
model described in~\cite{agarwal2020arbitrarily}. 
Our methods,  
rooted in Mori-Zwanzig theory, are simple and data-driven, and our contributions are practical 
and theoretical. 

On the practical side, we propose a compact, mathematically principled representation 
of the MRP derived from Mori-Zwanzig 
theory. In our formulation, 
the MRP is represented by, and can be generated from, a 
(small) number of memory kernels. These memory kernels are $N \times N$ matrices, where $N$ is the number of macrostates.
We propose a new method to obtain the kernels by
solving a certain linear system comprised of  correlation matrices. Efficient, scalable solvers designed for positive semidefinite systems can then be used to obtain the kernels. (E.g., RPCholesky~\cite{chen2022randomly,diaz2023robust} uses randomized low-rank approximation.) Numerical results 
on alanine dipeptide, a small 
protein, illustrate the promise 
of the method.

On the theoretical 
side, we show that these methods become exact as the number of memory kernels and the decorrelation time grow. This demonstration takes the following steps. To start, we give the first proof that coarse-grained dynamics described in~\cite{agarwal2020arbitrarily} in fact converges to a MRP (Theorem~\ref{thm:RE}). Then, we represent the transition probabilities of the MRP in terms of memory kernels using the discrete Mori Zwanzig equation~\eqref{eq:MZ}. And finally, we prove that equation~\eqref{eq:MZ} is exact (Theorem~\ref{thm:MZeqn}). This equation first appeared in~\cite{cao2020advantages} in a different setting (without the decorrelation). There, it was derived as an approximation of a continuous time Mori-Zwanzig equation. We give the first full derivation of~\eqref{eq:MZ} that shows it is exact for any choice of dynamical lag (we use lag $\tau$ in our setup). As $\tau$ can be significantly longer than the time step of the underlying dynamical integrator, exactness at the discrete time level is important. 

In addition, we provide exact expressions for the memory kernels in terms of an orthogonal dynamics (Appendix~\ref{sec:appendix_MZ}). While these expressions cannot directly be put to practical use, they help lend explainability to the kernels, and could potentially be used to quantify their decay in time. Our novel data-driven method for actually computing the memory kernels, based on the linear solve~\eqref{eq:linear},  can also be explained in terms of inter-macrostate correlations. 

Finally, we
show that our Mori-Zwanzig equation is 
optimal, in the sense that the representation is compact when the MRP representation is almost fully Markovian. We actually 
prove an ideal case of this, showing that all but
one of the memory kernels vanishes 
in the case where the MRP representation 
is in fact Markovian.

This article is organized as follows. We summarize our notation in Table~\ref{tab:symbol-definitions}. In Section~\ref{sec:Markov}, we review
how we discretize the underlying dynamics, following\cite{agarwal2020arbitrarily}. 
In Section~\ref{sec:mz}, we 
introduce the Mori-Zwanzig equation 
and explain how we use it to estimate 
memory kernels nonparametrically from 
short time simulations. We 
also show how the memory kernels 
can be used to infer longer time 
information. In Section~\ref{sec:QSD}, 
we give an outline of our proof that the discretized dynamics 
converges to a MRP (the proof is in Appendix~\ref{sec:appendix_MR}). In Section~\ref{sec:numerics}, we 
illustrate our method on 
alanine dipeptide. We show that 
we can reduce errors arising from 
ordinary spatial discretization, recovering 
accurate dynamics with a relatively 
small number of memory kernels. 
All proofs, including the derivation of the Mori-Zwanzig equation and the proof of convergence to a MRP, are in the Appendix.

\begin{table}[ht]
    \caption{Definitions of symbols used in this work.}
    \label{tab:symbol-definitions}
    \centering
    \begin{tabular}{c | @{\hspace{1em}}l@{}}
        Symbol & Definition \\ [0.5ex] \hline
        $X(t)$ & underlying Markov chain on microstates  \\
        $x$, $y$, $z$ & microstates \\
           $I$, $J$, $L$ & macrostates \\
    $N$ & number of macrostates \\
        $\tau$ & macroscopic time step\\
                $R(t)$ & macroscopic jump process \\
                        $r$, $s$, $t$ & times (multiples of $\tau$, when associated with $R(t)$) \\
            $s_-$, $t_-$ &  preceding times: $s_- = s-\tau$, $t_- = t-\tau$ \\
         $\tau_I$ & decorrelation time in macrostate $I$ 
         \\
        $\eta_I$ & QSD in macrostate $I$ \\
         $\T(s,t)$ & transition probability matrix \\
        $\T(t)$ & transition matrix of renewal process \\
        $\PP(t)$  & jump probability matrix \\
        $\K(t)$  & memory kernel matrix \\
                $C(t)$ & consecutive time in current macrostate \\
                $P$, $Q$ & projector and complementary projector \\
                 $\chi_I$ & characteristic function of macrostate $I$ \\
                 $n,m$ & nonnegative integers
    \end{tabular}
\end{table}

\section{Markov chains and Markov renewal process}\label{sec:Markov}

Throughout, $X(t)$ is an underlying Markov process evolving in a space 
of {\em microstates}. This process 
can be discrete or continuous 
in both time and space. We consider 
a division 
of microstates into finitely many
{\em macrostates} $I$, $J$, etc. 

Our work focuses on a discrete time jump process
$R(t)$ on these macrostates, with time step $\tau$, defined from the underlying process and a set of {\em decorrelation times}, written 
$\tau_I$, $\tau_J$, etc. 
The jumps occur when $X(t)$ spends consecutive time $\tau_J$ 
in some macrostate $J$. 
Specifically, $R(t)$ jumps from $I$ to $J$ at time $t$ if 
$X(t-c)$ 
is in macrostate $J$ for $0 \le c \le \tau_J$. Jumps only occur among 
distinct states ($J \ne I$) and at 
multiples of the time step ($t = n\tau$ for integer $n$). See
Figure~\ref{fig1} for an illustration.

To describe the evolution of $R(t)$, we define $\T_{IJ}(s,t)$ as the probability for $R(t)$ to be in $J$ at time $s+t$, assuming there was a jump into $I$ at time $s$. That is,
\begin{equation}\label{eq:defTst}
\T_{IJ}(s,t) = {\mathbb P}(R(s+t) = J|R(s_-) \ne I,\,R(s) = I),
\end{equation}
where we use the shorthand $s_- = s-\tau$.

The introduction of decorrelation times allows the underlying Markov process 
to reach a local equilibrium 
within each macrostate. 
Conceptually, when $\tau_J$ is large enough, $X(t)$ loses memory of how it entered $J$ by the time 
that $R(t)$ jumps into macrostate $J$. 
This makes $R(t)$ into a MRP, which means 
it has the Markov property at jump 
times~\cite{agarwal2020arbitrarily}. 
Note that $R(t)$ does not retain  
information about what occurs on timescales shorter than the decorrelation times and $\tau$. This is a modeling assumption that may lead to the loss of relevant dynamical information if important transition events occur on such timescales. On the other hand, information loss will be minimal when the typical residence time in a macrostate is much longer than both $\tau$ and the decorrelation time.

\begin{figure}
\centering
    
\includegraphics[width=0.45\textwidth]{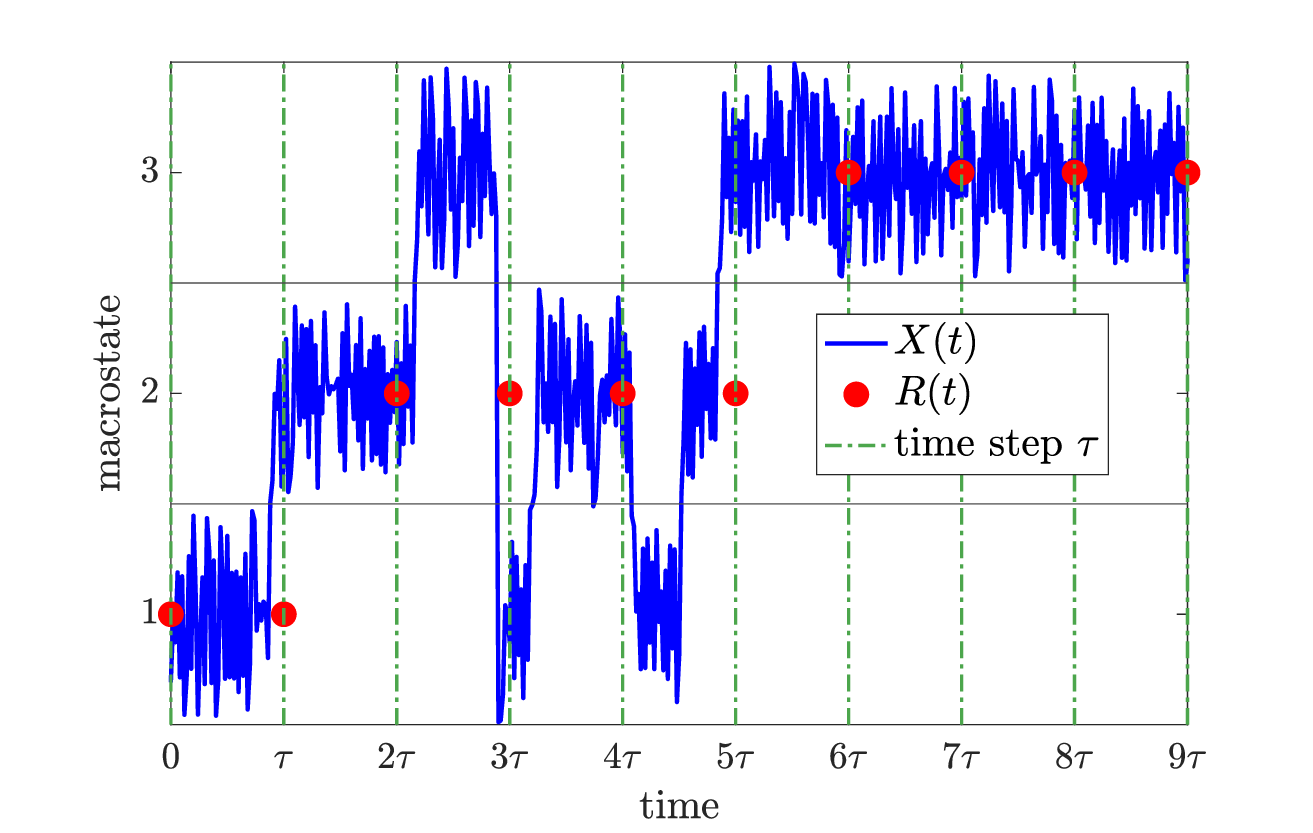}
    \caption{Illustration of $X(t)$ and $R(t)$, 
    with three macrostates labeled $1,2,3$, when the decorrelation times are $\tau_1 = \tau_2 = \tau_3 = \tau$. Solid horizontal 
    lines divide the macrostates. For illustrative purposes, we show an example where $X(t)$ makes several transitions that are not recorded by $R(t)$, due to a failure to decorrelate in macrostates.}
    \label{fig1}
\end{figure}

Assuming that $R(t)$ is in fact a MRP, we can write $\T(s,t) = \T(t)$,
where $\T(t)$ is a standard transition matrix for each $t$. These transition matrices  together satisfy a {\em renewal equation} defined by a {\em jump probability matrix $\PP(t)$}, where $\PP_{IJ}(t)$ is the probability for $R(t)$ to jump from $I$ to $J$ in time $t$:
\begin{equation*}
    \PP_{IJ}(t) = {\mathbb P}(R(s+t) = J|R(s_-) \ne I,\,R(s') = I,\,s\le s' < s + t).
\end{equation*}
The renewal equation is~\cite{cinlar1975exceptional}
\begin{equation}\label{eq:renewal}
\T(t) = \sum_{0< s \le t}  \PP(s)\T(t-s) + \F(t),
\end{equation}
where $\F_{IJ}(t)= \delta_{I = J}\sum_L \sum_{s>t} \PP_{IL}(s)$. Here, $\delta_{I=J} = 1$ if $I = J$, and $\delta_{I=J} = 0$ otherwise.
The time arguments here are multiples of $\tau$, and we continue with this convention for other equations associated with $R(t)$ below.

The Markov renewal framework of~\eqref{eq:renewal} is exact in the limit of 
large decorrelation times (Theorem~\ref{thm:RE}).
Below, 
we outline how to estimate $\T(t)$ 
in a principled, parameter-free way
using Mori-Zwanzig theory. Once $\T(t)$ 
is estimated, equation~\eqref{eq:renewal} can be 
used to compute the jump time distribution 
$\PP(t)$. This provides a principled 
way to describe -- and simulate -- the 
process $R(t)$, which exactly reflects 
the macroscopic behavior of $X(t)$.

Our setup above allows for situations where the decorrelation times 
are state-dependent: there  
is a (potentially different) decorrelation time $\tau_I$ for each macrostate $I$. For simplicity, 
in the numerical 
examples and ensuing discussion in Section~\ref{sec:numerics}, we
take all the 
decorrelation times to be the same and equal to $\tau$, i.e., $\tau_I = \tau$ for each $I$.

\section{Nonparametric estimation of transition probabilities}\label{sec:mz}

Using Mori-Zwanzig theory, 
\begin{equation}\label{eq:MZ}
    \T(t) = \sum_{0<s\le t} \K(s)\T(t-s),
\end{equation}
where $\K(s)$ are {\em memory 
kernels} that can be estimated from data, as we describe below. 
Equation~\eqref{eq:MZ} was derived as an approximation of a continuous-time Mori Zwanzig equation in~\cite{cao2020advantages}, 
while different discrete time Mori-Zwanzig equations have been described in~\cite{darve2009computing,lin2021data}.
We will give a short proof of exactness of~\eqref{eq:MZ} in  Appendix~\ref{sec:appendix_MZ} (Theorem~\ref{thm:MZeqn}), and 
provide more 
details on the 
memory kernel structure there.

Equations~\eqref{eq:renewal} and~\eqref{eq:MZ} appear superficially similar but are quite different. While $\PP(s)$ defines jump probabilities of the MRP, $\K(s)$ involves quantities associated to a so-called {\em orthogonal dynamics}. Roughly speaking, this dynamics describes  situations where $X(t)$ transitions between macrostates without decorrelating in them. 
We arrived at~\eqref{eq:MZ} by choosing a Mori-Zwanzig projector that leads to very compact representations (i.e., fast time decay of memory kernels) when $R(t)$ is nearly Markovian. Indeed, in Appendix~\ref{sec:appendix_MZ}, we show that if $R(t)$ is 
actually Markovian, only one memory kernel is nonzero, $\K(s) = 0$ for $s > \tau$. Meanwhile, if $R(t)$ is Markovian, then 
$\PP(s)$ is geometric in $s$ with rates in inverse proportion to the mean jump times between macrostates
(resulting in slow decay of $\PP(s)$ for large mean jump times).

While equation~\eqref{eq:MZ} could be used to solve for the memory kernels directly given enough sampling~\cite{cao2020advantages}, we find that the following setup is 
superior in practice. In order to nonparametrically estimate 
$\K(t)$, we introduce a loss function
\begin{equation}\label{eq:loss}
{\mathcal L}(\K) = \sum_{t\le t_{max}}\left\|\T(t) -\sum_{0<s\le \min\{t,t_{mem}\}} \K(s)\T(t-s)\right\|^2,
\end{equation}
where $t_{mem}$ is a cutoff time for the memory matrices, $t_{max}$ is a 
cutoff time for the transition matrices, and $\| \cdot\|$ represents the Frobenius norm. 

By setting the gradient of the loss function equal to zero, we get the 
following symmetric positive semidefinite linear system that can 
be solved for the memory matrices (see Appendix~\ref{sec:appendix_linear}):
\begin{equation}\label{eq:linear}
\sum_{0< s \le t_{mem}} \K(s)\A(s,t) = \B(t), \quad 0 < t \le t_{mem},
\end{equation}
where $\A$ and $\B$ are the correlation matrices
\begin{align}\begin{split}\label{eq:corr_matrices}
 \A(s,t) &= \sum_{r \le t_{max}} \T(r-s)\T(r-t)^T, \\
 \B(s) &= \sum_{r\le t_{max}} \T(r)\T(r-s)^T,
 \end{split}
\end{align}
and where by convention $\T(s) = 0$ for $s<0$. (Various regularizations, including ridge regression that penalizes the Frobenius norms of the memory kernels, 
can easily be applied if desired.) 


The memory kernels $\K(t)$ can then 
be obtained as follows. First, we 
can estimate $\T(t)$ for $t \le t_{max}$ from data of the underlying 
Markovian dynamics. Then, we can 
estimate the matrices $\A$ and $\B$ in~\eqref{eq:corr_matrices}. 
Finally, we solve the linear system~\eqref{eq:linear} to 
obtain $\K(t)$ for $0<t \le t_{mem}$. 

With the memory kernels in hand, the 
transition probabilities can be 
estimated by repeatedly applying the equation
\begin{equation}\label{eq:inferT}
\T(t) \approx \sum_{0<s\le \min\{t,t_{mem}\}} \K(s)\T(t-s),
\end{equation}
while incrementally increasing $t$.
Note that this allows for estimation up to any time, including beyond $t_{max}$.
The memory kernels carry $N^2 k$ 
entries in total, with $N$ 
the number of macrostates and $k$ 
the number of memory kernels. We 
find good results even with a 
relatively small number of kernels; see Section~\ref{sec:numerics}. Once $\T(t)$ is 
in hand, $\PP(t)$ can be computed by unrolling 
the renewal equation~\eqref{eq:renewal}.

In Appendix~\ref{sec:Markov}, we show that if $R(t)$ is actually 
a Markov chain -- that is, if it has 
the Markov property at {\em all} times, not just at jump times -- 
then $\K(t)= 0$ for $t > \tau$. 
In this case, 
$\T(n \tau) = \K(\tau)^n = \T(\tau)^{n}$, 
and the estimation of the system only depends on the underlying Markov chain dynamics at lag $\tau$. 
Equation~\eqref{eq:inferT} provides an 
extension of 
this to allow for 
non-Markovian behavior.

Other methods for 
estimating memory kernels 
have been recently described in~\cite{cao2020advantages,lin2022regression,dominic2023building,dominic2023memory}. We find that our method 
significantly outperforms applying a direct solve~\cite{cao2020advantages} in equation~\eqref{eq:MZ}, 
while inheriting the simplicity of least squares~\cite{lin2022regression}, 
and interpretability 
in terms of time correlation 
matrices.

\section{Quasistationary distributions, and 
convergence to a Markov renewal process}\label{sec:QSD}

For large enough decorrelation times, 
the underlying process reaches a local equilibrium each time that $R(t)$ 
makes a jump, leading 
to a Markov property for 
$R(t)$. We now make this 
precise using {\em quasistationary distributions} (QSDs).

\begin{figure*}
\centering
\includegraphics[trim=4.25cm 0cm 4.25cm 0cm, width=0.95\textwidth, clip]{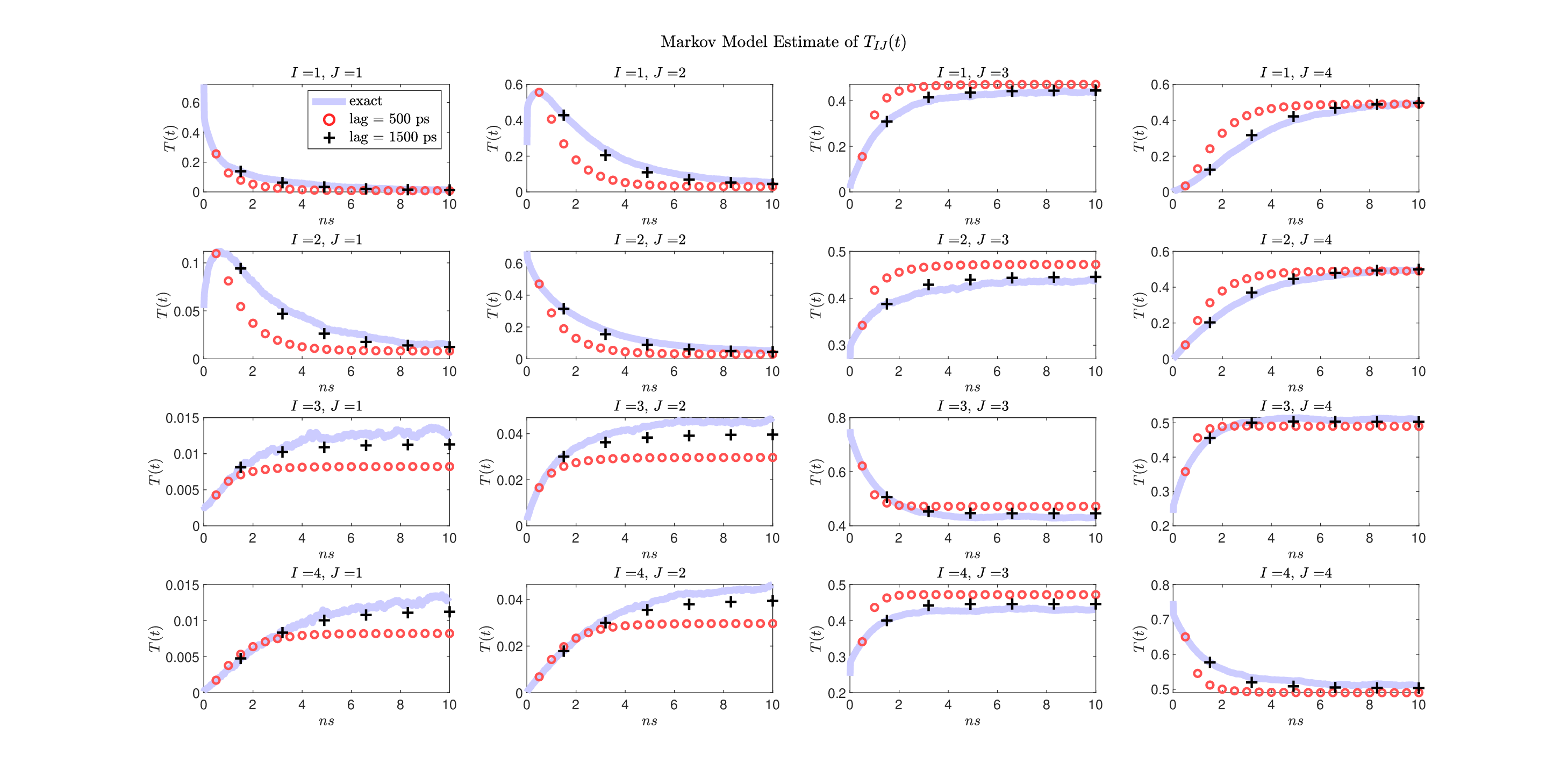}
    \caption{Building a Markov model for alanine dipeptide, using states defined through PCCA. 
    Except at very long lags, the Markov model is considerably less accurate than what we obtain with our methods (see Figure~\ref{fig:mz_comp}). This is because a simple coarse-graining of $X(t)$ into these states is not sufficiently Markovian. Each Markov model is based on a single transition matrix, computed from counts of transitions of $X(t)$ between macrostates at the specified lag time. Transitions at longer lags are computed using powers of this single matrix.} 
    \label{fig:msm_comp}
\end{figure*}

The QSD of $X(t)$
in $I$ is defined 
by the condition that if $X(t)$ is 
initially distributed as the QSD in $I$, 
then conditionally on staying in $I$, it remains distributed as the QSD. 
Writing $\eta_I$ for the QSD in $I$, 
\begin{equation}\label{eq:QSD}
\eta_I(\cdot) = \int \eta_I(dx){\mathbb P}(X(t) \in \cdot | X(0) = x,\, X(s) \in I,\, s \le t),
\end{equation}
where the variable $x$ represents microstates of $X(t)$.

Under mild assumptions~\cite{collet2013quasi,champagnat2023general},
\begin{equation}\label{eq:QSD_conv}
\|\eta_I - {\mathbb P}(X(t) \in \cdot \,|\, X(s) \in I,\,s \le t)\| \le c_I \delta_I^t,
\end{equation}
where $c_I$ and $\delta_I < 1$ are constants, 
and the norm is the total variation 
of measures. Informally, given that $X(t)$ remains in macrostate $I$, it converges to $\eta_I$ at a geometric rate. 

In Theorem~\ref{thm:RE} of Appendix~\ref{sec:appendix_MR}, we show that 
\begin{equation}\label{eq:renewal_approx1}
\T(s,t) = O(t\delta^\sigma) + \sum_{0<r \le t}\PP(r)\T(s,t-r) + \F(t),
\end{equation}
where $\PP(t)$ is the jump probability matrix 
of a Markov renewal process, 
$\F_{IJ}(t) = \delta_{I=J} \sum_L \sum_{s>t} \PP_{IL}(s)$, and $\delta = \max_I \delta_I$, $\sigma = \min_I \tau_I$. 
It follows that the transition 
matrices $\T(s,t)$ converge  
to the transition matrices of a Markov 
renewal process defined by the jump 
time distribution $\PP(t)$, at 
a geometric rate in terms of the decorrelation 
times. 

\begin{figure*}
\centering
\subfloat[PCCA states]{
\includegraphics[trim=4.25cm 0cm 4.25cm 0cm, width=0.95\textwidth, clip]{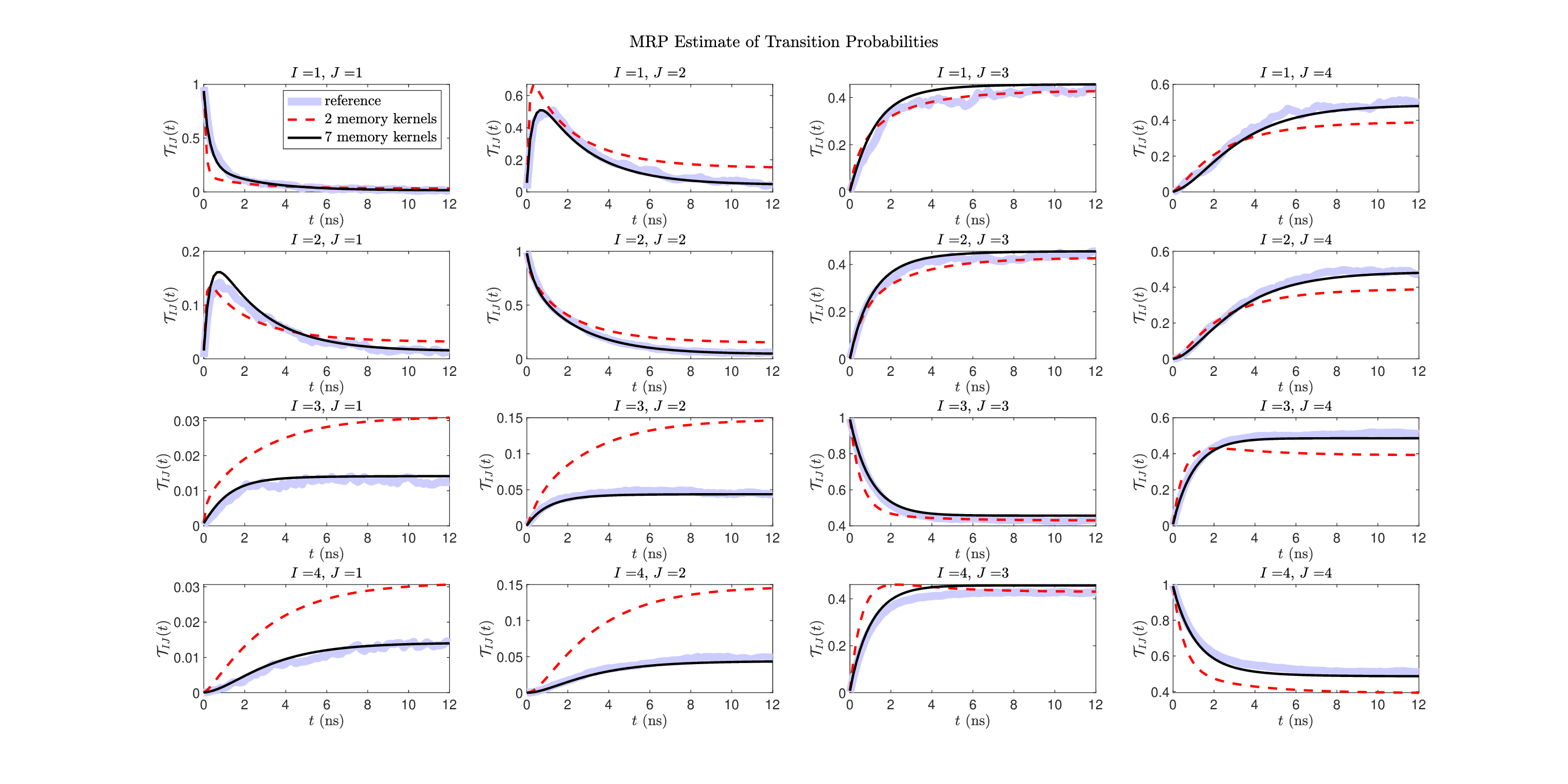}
}

\subfloat[Rectangular states]{
\includegraphics[trim=4.25cm 0cm 4.25cm 0cm, width=0.95\textwidth, clip]{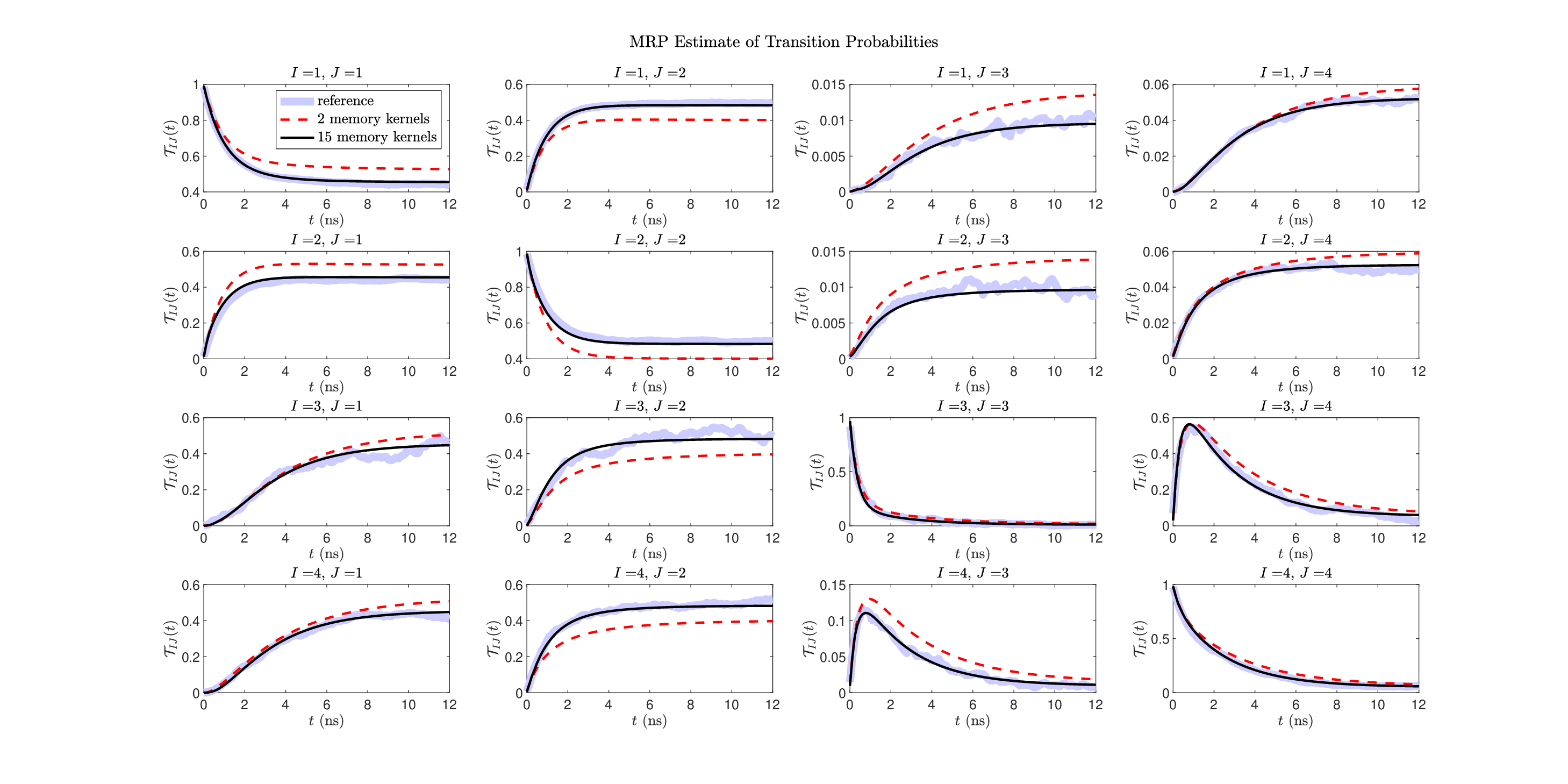}
}
\caption{Results from our method for parametrizing the MRP using (a) states defined by PCCA and (b) equal rectangular states.  A very good parameterization is achieved in each case with $7$ and $15$ memory kernels in (a) and (b) respectively.
Shown in (a) and (b) are transition probabilities inferred using~\eqref{eq:inferT} with a smaller number of memory kernels (dashed line) and a larger number of memory kernels (solid line). The kernels are computed using~\eqref{eq:linear}-~\eqref{eq:corr_matrices} with cutoff time twice the memory length ($t_{max} = 2t_{mem}$). We use the macroscopic time steps, $\tau = 8$ ps (a) and $\tau = 30$ ps (b), that define the ``good'' decorrelation times. (See Figure~\ref{fig:mrp_conv} for the choice of $\tau$ in (a).) Results are clearly improved with the larger number of memory kernels.}
    \label{fig:mz_comp}
\end{figure*}

\section{Numerical results}\label{sec:numerics}

To demonstrate the potential of our method, we apply it
to alanine dipeptide, using an MD trajectory~\cite{agarwal2020arbitrarily} of length about $70$ ms. 
Positions in $\phi$-$\psi$ space were 
saved at every $2$ ps.  The macrostates are either chosen by using PCCA or by dividing $\phi$-$\psi$ space 
into four equal rectangles. While the PCCA states are 
highly metastable, the rectangular states are not. 
A finite spatial discretization limits the accuracy of 
Markov models, as seen in Figure \ref{fig:msm_comp}, 
which shows
that a Markov model does not 
accurately represent the 
discretized alanine dipeptide dynamics, except at 
long timescales.

We use a decorrelation time $\tau_I = \tau$ ps that is the same for all states $I = 1,2,3,4$. These decorrelation times were chosen to be large enough to obtain good numerical accuracy of the renewal equation~\eqref{eq:renewal}; see Figure~\ref{fig:mrp_conv}. 
Then we construct a trajectory $R(t)$ as described in Section~\ref{sec:Markov} (see also Figure~\ref{fig1}), and 
apply our method. The alanine MD trajectory was split in half into a training set and a test (or reference) set. 
We use the former to create our model of $R(t)$, and the latter to create reference results.

To assess our method, we compare 
it with a reference that uses the 
indicated value of $\tau$. The reference results 
are based on simple counts of transitions.
Figure~\ref{fig:mz_comp} compares reference counts with 
our method's estimates of $\T(t)$.
Figure~\ref{fig:mz_conv100} shows the 
error in $\PP(t)$. To mitigate noise effects 
from finite sampling, we use the error measurement
\begin{equation}\begin{split}\label{eq:Cramer}
    \textup{Error} &= \sum_{I,J} \int_0^\infty \left(\int_0^t \frac{[\PP_{IJ}(s)-\hat{\PP}_{IJ}(s)]}{Z_{IJ}}ds\right)^2\frac{\PP_{IJ}(t)}{Z_{IJ}}dt,
    \end{split}
\end{equation}
where $Z_{IJ} = \int_0^\infty \PP_{IJ}(t)\,dt$, and where $\hat{\PP}(t)$ is our estimate on training data, with $\PP(t)$ the reference. This is a slight variation on the Cramer-von Mises criterion~\cite{anderson1962distribution}. 
Figures~\ref{fig:mz_comp} and~\ref{fig:mz_conv100} show that the approach outlined in Section~\ref{sec:mz} gives good agreement with the reference, with just a few memory kernels.

\begin{figure*}
\centering
    \subfloat[$\tau = 2$ps]{
    \includegraphics[trim=4.25cm 0cm 4.25cm 0cm, width=0.95\textwidth, clip]{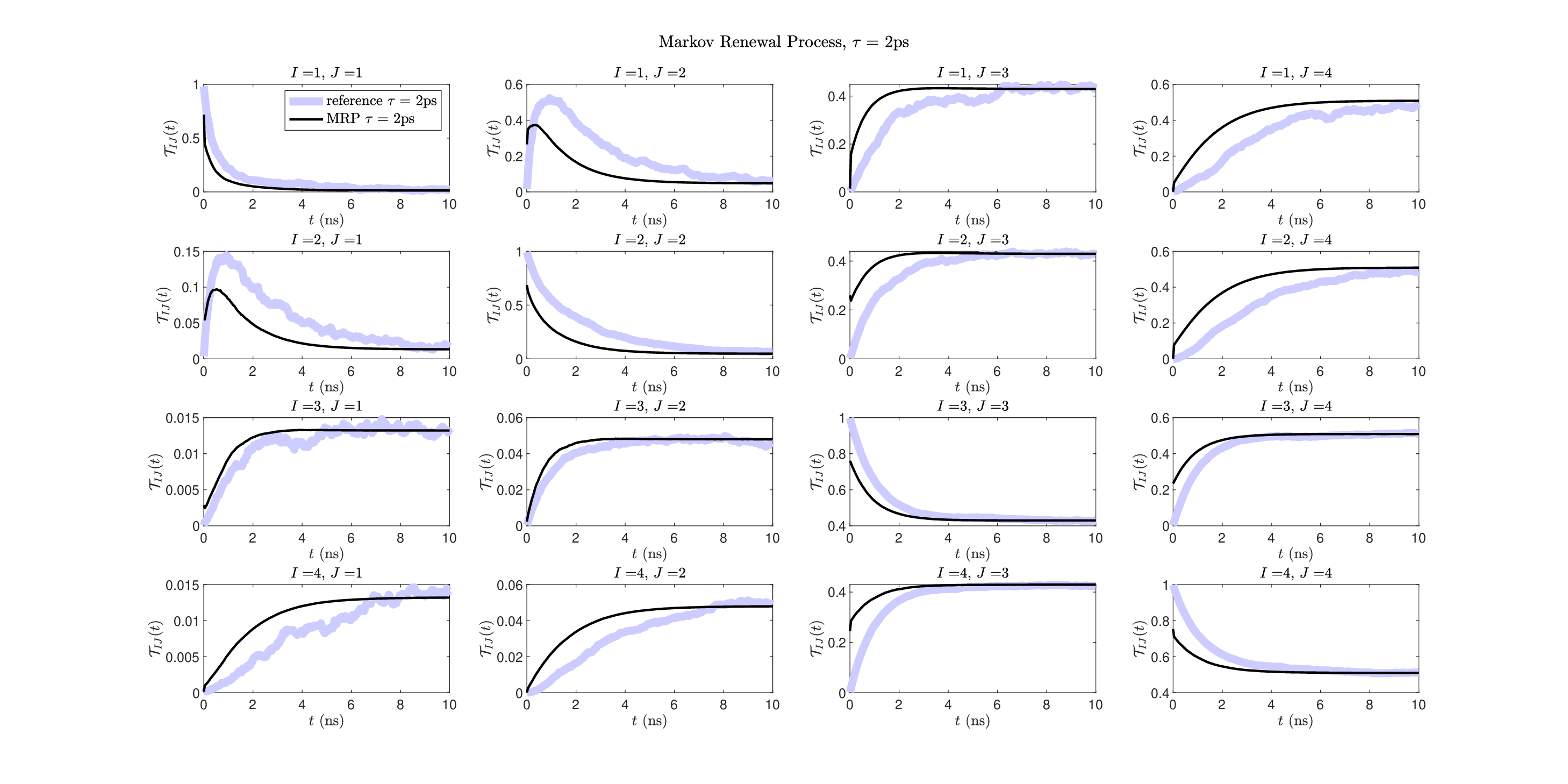}
    }
    
    \subfloat[$\tau = 8$ps]{
    \includegraphics[trim=4.25cm 0cm 4.25cm 0cm, width=0.95\textwidth, clip]{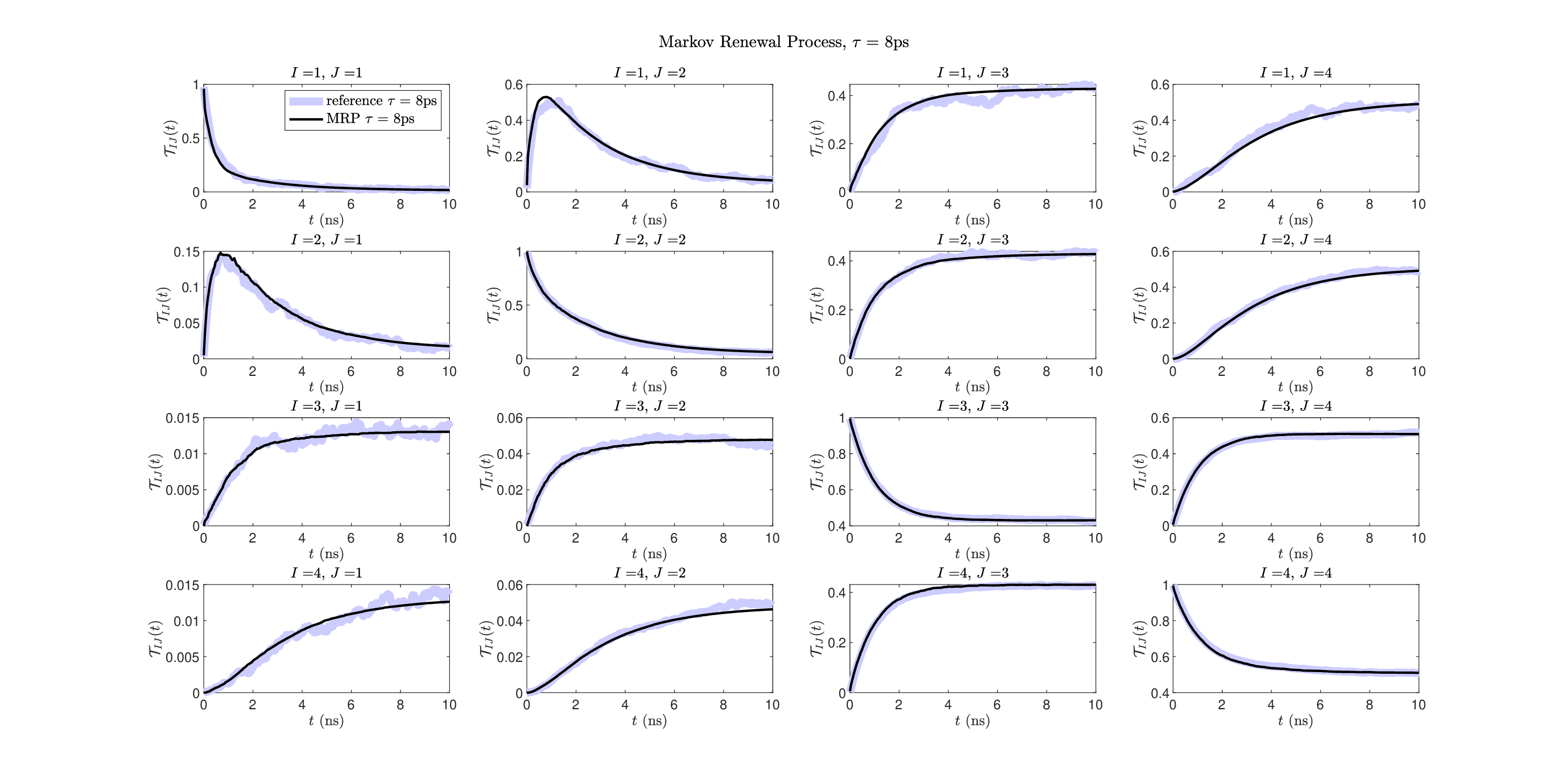}
    }
    \caption{Verifying that $R(t)$ is approximately a MRP for large enough decorrelation times, for PCCA states. Plotted are reference transition probabilities computed from simple counts of $R(t)$, for $\tau = 2$ ps in (a) and $\tau = 8$ ps in (b), compared to probabilities computed from the renewal equation~\eqref{eq:renewal}. (In the renewal equation, the jump probability matrix, $\PP$, is similarly computed from simple counts.) The (constant) decorrelation time must be chosen long enough to allow local equilibration within the macrostates. There is significant disagreement using $\tau = 2$ ps in (a), while the larger value, $\tau = 8$ ps, in (b) gives good agreement without being unnecessarily large.}
    \label{fig:mrp_conv}
\end{figure*}

\begin{figure}
\centering
\subfloat[PCCA States, $\tau = 8$ ps]{\includegraphics[trim=1cm 0cm 1cm 0cm, width=0.45\textwidth, clip]{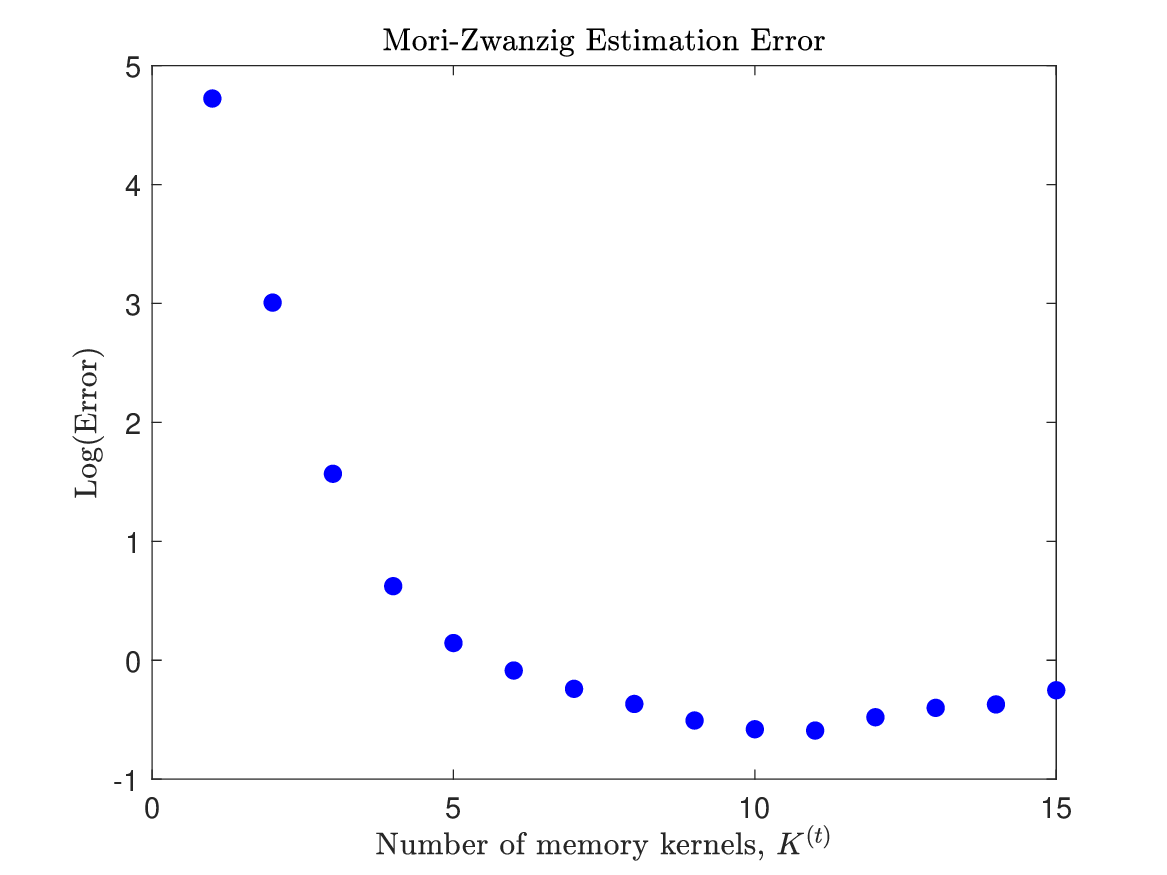}}

\subfloat[Rectangular States, $\tau = 30$ ps]{
\includegraphics[trim=1cm 0cm 1cm 0cm, width=0.45\textwidth, clip]{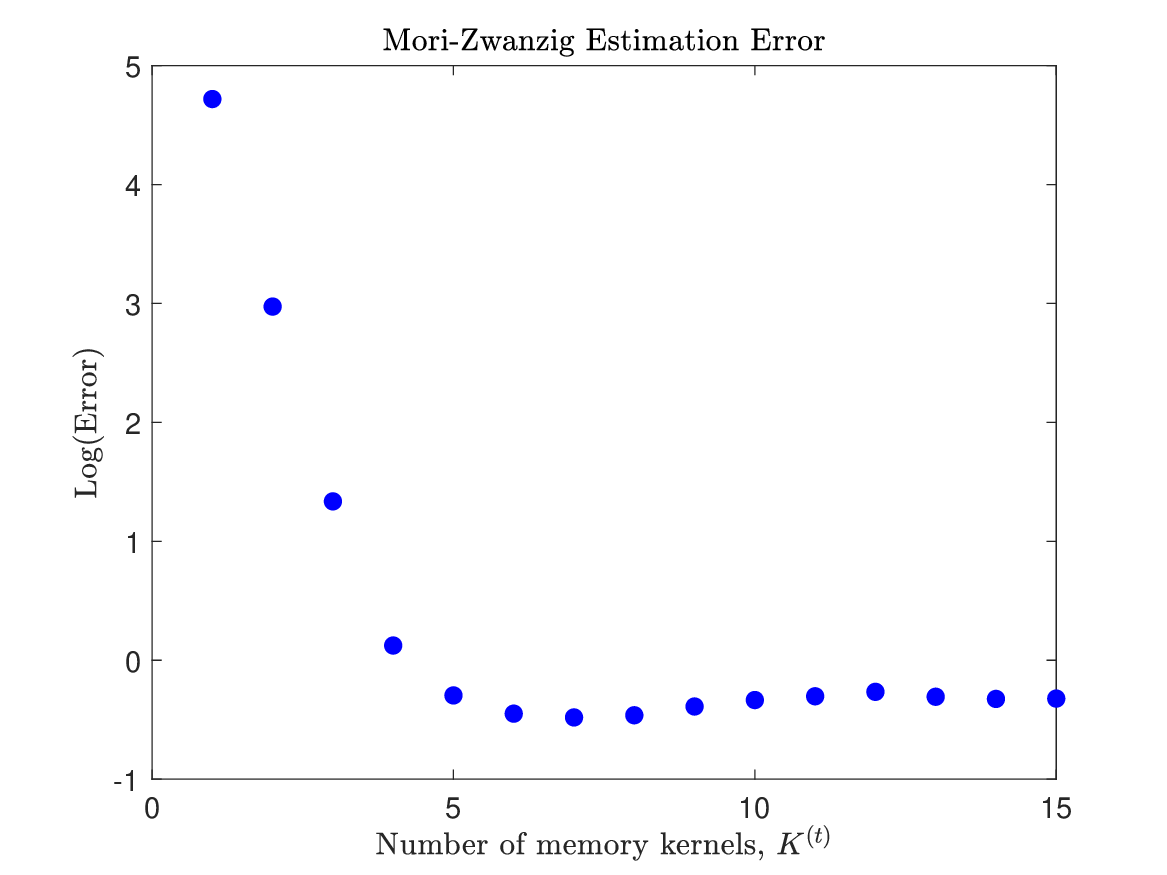}
}
    \caption{The error in our method vs. the number of memory kernels. 
    Memory kernels are estimated at multiples of $\tau$ and the error is defined by~\eqref{eq:Cramer}. 
    The cutoff times, (a): $t_{max}=120$ ps  and (b): $t_{max}=900$ ps, are chosen by applying the rule $t_{mem} = 0.5\times t_{max}$ to the largest $t_{mem}$ pictured (for instance in (b)  $t_{mem} = 450$ ps, corresponding to $15$ memory kernels).}
    \label{fig:mz_conv100}
\end{figure}

\subsection*{Practical considerations}

Our method requires a choice of macrostates and of scalar parameters $\tau$, $t_{mem}$ and $t_{max}$. 
Here, we discuss how these parameters might be chosen. Briefly, the microstates should 
be chosen as metastable 
states associated with 
timescales of interest; 
the parameter $\tau$ should 
be large enough for the Markov property to (nearly) hold, 
but no larger; and $t_{mem}$ 
and $t_{max}$ should be 
as large as needed to 
accurately parametrize 
the model, given constraints 
on how much data is available. 
We discuss all this in more detail below. In this 
discussion, as in the numerical simulations, we assume that all the decorrelation times equal $\tau$, that is, $\tau_I = \tau$ for each macrostate $I$.

We first consider $\tau$, $t_{mem}$ and $t_{max}$. With enough data, increasing $\tau$, $t_{mem}$ and $t_{max}$ will systematically improve results; in practice, though, there are tradeoffs. (Caveat: a too large $\tau$ causes modeling problems; see below.) Clearly, there need to be enough sampled transitions at each time lag. That is, we need enough samples of $\T_{IJ}(t)$ for each $I,J$ and $t \le t_{max}$. So for example, if data comes in the form of many short trajectories of $X(t)$, then increasing $t_{max}$ lowers transition counts, and can improve model fidelity only to the extent that the number of sampled transitions does not get too low. The parameter $t_{mem}$ defines the number of memory kernels, and we found good results 
when pairing it to $t_{max}$ using the rule $t_{mem} \approx 0.5 \times t_{max}$. 
In practice, $t_{max}$ (and/or $t_{mem}$) could be chosen with standard techniques like cross-validation.

The macrostates and the parameter $\tau$ are more fundamental (though they are also subject to similar considerations concerning transition counts). Unlike $t_{mem}$ and $t_{max}$, which are parameters used to obtain the memory kernels which generate an {\em approximation} of $R(t)$, the macrostates and $\tau$ actually {\em define} $R(t)$. 
They must be chosen carefully to yield good results. For a given set of macrostates, a minimum value of $\tau$ is set by the requirement that $R(t)$ is approximately a MRP; the required value can be found empirically by using a plot like Figure~\ref{fig:mrp_conv} (we simply chose one ``by eye'' from such plots). Good 
macrostates are ones in which decorrelation 
occurs on a time scale much smaller than the 
typical escape time -- i.e., good macrostates 
are metastable~\cite{lelievre2015accelerated}. In practice, they could 
be chosen by standard techniques like PCCA~\cite{chodera2014markov}.

A bad choice of macrostates cannot be rescued by a good choice of $\tau$. Indeed, $R(t)$ does not retain any events that occur on timescales smaller than $\tau$. As a result, if $\tau$ is close or larger than typical transition times between macrostates, then $R(t)$ can miss such transitions (as shown in Figure~\ref{fig1}), resulting in a potentially accurate but uninformative model. A good choice of both the macrostates and of $\tau$ is therefore important. 
For the purposes of this article, we think of the macrostates as already being given, and we choose $\tau$ by looking at plots like Figure~\ref{fig:mrp_conv}, 
increasing $\tau$ until we find a good match.

Figure~\ref{fig:msm_comp} shows 
an ordinary Markov model
based on PCCA states. These 
PCCA states are the same as reference~\cite{agarwal2020arbitrarily}. A lag of $1500$ ps is needed for 
accuracy comparable to 
our methods. (Compare with Figure~\ref{fig:mz_comp}(a).) This lag is on the order of the longest mean transition time, roughly $1000$ ps.  Particularly for macrostates 1 and 2, this lag sacrifices knowledge of shorter timescale (but still physically relevant) state-to-state transitions. Although these 
states are considered very good (Markovian) states, our methods still provide significant improvement over Markov models, as illustrated in 
Figure~\ref{fig:mz_comp}.

Figure~\ref{fig:mz_comp}(a) shows
results from our methods when using 
the PCCA states. There, we use a decorrelation 
time $\tau = 8$ ps. 
This serves as the fundamental 
time step of our coarse-grained model, and is small enough 
that few transitions are missed. 
To build our model, we use many short trajectories of length $112$ ps, smaller than the shortest mean transition time of $175$ ps. (This trajectory length corresponds to using $\tau = 8$ ps, with $t_{mem} = 7$ memory kernels and $t_{max}=2\times t_{mem}$.) 
In contrast, a similarly accurate Markov model in Figure~\ref{fig:msm_comp} requires 
trajectories of length $1500$ ps. Recall that the 
longest mean transition time is around $1000$ ps. 
In sum, the renewal model requires significantly shorter trajectories and is 
more accurate than the Markov model on all timescales. 


Figure~\ref{fig:mz_comp}(b) shows 
analogous results for 
unphysical macrostates 
(defined as equal rectangles in $\phi$-$\psi$ coordinates, divided by the lines $\phi = 0, \pm \pi$ and $\psi = 0, \pm \pi$). Although these states are no longer metastable, results are similar to Figure~\ref{fig:mz_comp}(a). (In this case 
$\tau$ 
needs to be larger, however, resulting 
in our model missing some transitions, as discussed above.) We find good accuracy when $\tau = 30$ ps, $t_{mem} = 15$ memory kernels, and trajectories have length $900$ ps.
A Markov model would require a lag of $5000$ ps for similar 
accuracy. 
Smaller Markov model lags of $\sim 1000$ ps result in wildly inaccurate estimates for even a few time steps' prediction. 





\section{Discussion}

The methodology introduced in this paper allows for the systematic exploitation of a rich set of trade-offs between compactness, expressiveness, and accuracy. It is particularly well suited to cases where the system contains a relatively small number of metastable states, but where metastability is insufficient for a Markovian assumption to be accurate. In contrast to conventional approaches like Markov State Models, where accuracy can be improved by increasing the number of states at the cost of interpretability, the accuracy of the  approach proposed is instead controlled by increasing the decorrelation time, to ensure the convergence to a MRP. Doing so however comes with its own trade-off, as the expressiveness of $R(t)$ decreases when the decorrelation time exceeds the shortest transition time. However, the geometric convergence rate to an MRP makes this trade-off particularly advantageous as a small increase in decorrelation time yields a large increase in accuracy. Therefore, even for modestly metastable systems, it should be possible to produce very accurate MRPs using decorrelation times that are short compared to typical transition times, hence minimizing the loss of kinetic information. In this situation, the MZ approach described above will also yield a compact representation in terms of a limited number of kernel matrices. As shown above, this approach allows one to obtain compact and accurate models even with sub-optimal state definitions, which is very useful given that optimizing state definitions in high dimension is generally difficult. That being said, the approach cannot fix state definitions where most of the states are not at least somewhat metastable, as accuracy would demand very long decorrelation times, which would then entail low expressiveness.
It is arguable, however, that no representation in terms of jump processes would be appropriate in such a scenario.

\begin{acknowledgments}
D. Aristoff and M. Johnson gratefully acknowledge support from the National Science Foundation via Award No. DMS 2111277.
D. Perez was supported by the Laboratory Directed Research and Development program of Los Alamos National Laboratory under project number 20220063DR. Los Alamos National Laboratory is operated by Triad National Security, LLC, for the National Nuclear Security Administration of U.S. Department of Energy (Contract No. 89233218CNA000001).

D. Aristoff and M. Johnson acknowledge illuminating 
discussions with D.M. Zuckerman, J. Copperman, J. Russo, G. Simpson, and R.J. Webber.
\end{acknowledgments}

\appendix

\section{Convergence to a Markov renewal process}\label{sec:appendix_MR}

We begin by introducing some notation. Let
\begin{equation*}
E_{IJ}(s,t) = \{R(s+t) = J, \,R(s') = I, \,s \le s' < s+t\}
\end{equation*}
be the event of switching to from $I$ to $J$ after a time $t$, starting from time $s$. Let
\begin{equation*}
E_J(t) = \{R(t) = J\},\qquad  E_J^c(t) = \{R(t) \ne J\}
\end{equation*}
be the events that $R(t) = J$ and $R(t) \ne J$, respectively. 

We use $\sim$ to indicate equality in distribution; for example, $X(s)\sim \eta_I$ indicates that 
$X(s)$ is distributed as $\eta_I$.

The following result demonstrates convergence in distribution of $R(t)$ to a Markov renewal process as the decorrelation times grow.
\begin{theorem}[Exactness of renewal equation]\label{thm:RE}
Assume that each macrostate $I$ has a QSD $\eta_I$, and assume that~\eqref{eq:QSD_conv} holds. Define
\begin{equation}\label{eq:defT}
    \T_{IJ}(t) = {\mathbb P}(E_J(s+t)|E_I(s),\,X(s) \sim \eta_I).
\end{equation}
Then $\T(s,t)$ defined by~\eqref{eq:defTst} converges to $\T(t)$ defined by~\eqref{eq:defT} as $\min_I \tau_I \to \infty$. Moreover, 
the limit $\T(t)$ is 
the unique solution to
the renewal equation~\eqref{eq:renewal} when $\PP$ is defined by
\begin{align}\begin{split}\label{eq:TPF}
\PP_{IJ}(t) &=\delta_{I \ne J}{\mathbb P}(E_{IJ}(s,t)|E_I(s),\,X(s) \sim \eta_I).
\end{split}
\end{align} 
\end{theorem}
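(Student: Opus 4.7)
The plan is to base everything on a single observation: at any $R$-jump time $s$ into macrostate $I$, the law of $X(s)$ is $O(\delta_I^{\tau_I})$-close in total variation to the QSD $\eta_I$, with both the convergence $\T(s,t) \to \T(t)$ and the renewal identity following from this fact together with the strong Markov property of $X$. I would carry out the proof in three stages: (a) a QSD-at-jump-time lemma; (b) convergence of $\T(s,t)$ to $\T(t)$; (c) derivation of the renewal equation, plus uniqueness of its solution.

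For (a), I condition on the event $\{R(s_-) \ne I,\, R(s) = I\}$. By the very definition of the jump process, this encodes that $X$ has spent the interval $[s-\tau_I, s]$ uninterruptedly in $I$. Applying the Markov property of $X$ at time $s - \tau_I$ and then the QSD bound~\eqref{eq:QSD_conv} over a duration $\tau_I$, the conditional law of $X(s)$ is within $c_I \delta_I^{\tau_I}$ of $\eta_I$, uniformly in the distribution at entry into $I$. For (b), I couple the conditional law of $X(s)$ to $\eta_I$ using (a) and propagate the coupling forward by the Markov dynamics of $X$; since the event $E_J(s+t)$ depends only on $X$ after time $s$, this gives $|\T_{IJ}(s,t) - \T_{IJ}(t)| \le c_I \delta_I^{\tau_I}$, which sends $\T(s,t) \to \T(t)$ as $\min_I \tau_I \to \infty$.

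For (c), I condition on the first $R$-jump after time $s$. Either there is no jump in $(s, s+t]$, contributing exactly $\F_{IJ}(t) = \delta_{I=J}\sum_L\sum_{r>t}\PP_{IL}(r)$; or the first jump occurs at time $s + r$ into some $L \ne I$, with probability $\PP_{IL}(r)$ by definition~\eqref{eq:TPF}. In the latter case, the strong Markov property of $X$ at $s+r$ combined with (a) applied to $L$ makes the evolution after the jump equivalent, up to $O(\delta_L^{\tau_L})$ total-variation error, to a fresh MRP initialized at $L$ with $X \sim \eta_L$. Summing over $r$ and $L \ne I$, with at most $t/\tau$ values of $r$, yields~\eqref{eq:renewal_approx1}; passing to the limit gives~\eqref{eq:renewal} for $\T(t)$. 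Uniqueness is then immediate by induction on multiples of $\tau$: the $s = t$ term in~\eqref{eq:renewal} contributes $\PP(t)\T(0) = \PP(t)$, and all other terms involve $\T$ at strictly earlier times, so $\T(t)$ is determined from $\T(r)$, $r < t$, and the data $(\PP, \F)$.

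The main obstacle will be (a): one must verify that the $R$-level conditioning really does translate cleanly to the $X$-level survival event, and that the QSD bound~\eqref{eq:QSD_conv} applies uniformly over the possible entry distributions into $I$, which in a continuous-space setting requires a small technical argument about conditioning on events of potentially small probability. A secondary difficulty lies in the error bookkeeping in (c), where one must check that the $O(\delta_I^{\tau_I})$ errors aggregated across all intermediate jump times and macrostates sum to the claimed $O(t\delta^\sigma)$ rather than anything larger.
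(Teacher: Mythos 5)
Your proposal matches the paper's proof closely: the paper also decomposes $\T_{IJ}(s,t)$ via the law of total probability over the first jump out of $I$, establishes that each resulting term is within $c\delta^\sigma$ of its limiting counterpart by combining the QSD bound~\eqref{eq:QSD_conv} with the Markov property of $X$ at the jump time (your lemma (a)), and aggregates the $O(t/\tau)$ error terms to the stated $O(t\delta^\sigma)$. The only real difference is in the uniqueness claim: the paper defers to a standard renewal-theory result (Proposition~4.2 of Cinlar), whereas your direct induction on multiples of $\tau$ using $\T(0)=\mathrm{Id}$ is more elementary and equally valid.
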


\begin{proof}
Using the law of total probability, 
\begin{align}\begin{split}\label{eq:tot_prob}
\T_{IJ}(s,t) &= {\mathbb P}(E_J(s+t)|E_I^c(s_-),\, E_I(s)) \\
&= \sum_{K\ne I}\sum_{0<r \le t} {\mathbb P}(E_J(s+t)|E_I^c(s_-),E_{IK}(s,r)) \\
&\qquad \qquad\times {\mathbb P}(E_{IK}(s,r)|E_I^c(s_-),\,E_I(s)) \\
&+ \delta_{I = J} {\mathbb P}(E_{II}(s,t)|E_I^c(s_-),\,E_I(s)).
\end{split}
\end{align}
Let $\sigma = \min_I \tau_I$, and in the notation of~\eqref{eq:QSD_conv}, define
\begin{equation*}
    c = \max_I c_I, \quad \delta = \max_I \delta_I.
\end{equation*}
Using~\eqref{eq:QSD_conv},~\eqref{eq:defT}, and the Markov property of $X(t)$,
\begin{align}\begin{split}\label{eq:Tlong}
\T_{IJ}(s,t)  
&={\mathbb P}(E_J(s+t)\,|\,E_I^c(s_-),\,E_I(s)) \\
&=  \int{\mathbb P}(E_J(s+t)\,|\,E_I^c(s_-),\,E_I(s),\,X(s) = x) \\
&\qquad\quad  \times {\mathbb P}(X(s) \in dx\,|\,E_I^c(s_-),\,E_I(s)) \\
&=  \int {\mathbb P}(E_J(s+t)\,|\,E_I(s),\,X(s) = x)\nu_I(dx) + \epsilon_I \\
&= \T_{IJ}(t) + \epsilon,
\end{split}
\end{align}
where $|\epsilon| \le c \delta^\sigma$. Similar calculations show that 
\begin{align*}
&    \delta_{I \ne K}{\mathbb P}(E_J(s+t)|E_I^c(s_-),E_{IK}(s,r)) = \T_{KJ}(t-r) + \epsilon \\
& \delta_{I \ne K}{\mathbb P}(E_{IK}(s,r)|E_I^c(s_-),\,E_I(s)) = \PP_{IK}(r) + \epsilon \\
& {\mathbb P}(E_{II}(s,t)|E_I^c(s_-),\,E_I(s)) =\F_{II}(t)  + \epsilon,
\end{align*}
where each $\epsilon$ is different but $|\epsilon| \le c \delta^\sigma$, and 
\begin{align*}
    \F_{IJ}(t) &= 
\delta_{I=J} {\mathbb P}(E_{II}(s,t)|E_I(s),\,X(s)\sim \nu_I)  \\
&= \delta_{I=J} \sum_L \sum_{s>t} \PP_{IL}(s).
\end{align*}
Combining the previous three displays with~\eqref{eq:tot_prob},
\begin{equation}\label{eq:renewal_approx}
\T(s,t) = O(t\delta^\sigma) + \sum_{0<r \le t}\PP(r)\T(s+r,t-r) + \F(t).
\end{equation}

Now from~\eqref{eq:Tlong}, we 
conclude that $\T(s,t)$ converges to $\T(t)$ as $\sigma \to \infty$. Meanwhile, using~\eqref{eq:renewal_approx}, it is readily shown from a standard renewal equation representation~\cite[Proposition 4.2]{cinlar1975exceptional} that $\T(t)$ is the unique solution to equation~\eqref{eq:renewal}.
\end{proof}

The proof shows that the convergence 
rate is geometric in $\sigma$ on finite time intervals, suggesting 
that large decorrelation times are 
not needed in order to model $R(t)$ 
as a Markov renewal process, at 
least for reasonably defined states.

\section{Actions of projector and Markov kernels}\label{sec:actions}

Below, we introduce another process $C(t)$ that counts the consecutive 
time that $X(t)$ has spent in its 
current macrostate, where the count stops at $\tau_J$ if $X(t) \in J$. 

To develop the Mori Zwanzig theory, we introduce the augmented Markov chain 
$(X(t),R(t),C(t))$ on 
augmented states $(x,I,s)$, where 
$x$ and $I$ represent the current 
values of $X(t)$ and $R(t)$, and $s$ is the consecutive 
time that $X(t)$ has spent in 
the macrostate in which it currently resides, up to the decorrelation time. 
This Markov chain has time step $\tau$.

Below, let ${\mathbb P}^{x,I,s}$ denote  probability for the 
augmented Markov chain that 
starts at $(X(0),R(0),C(0)) = (x,I,s)$. 
Let $T$ be the Markov kernel of this 
augmented chain,
\begin{align}\begin{split}\label{eq:T}
&T(x,I,s; dy,J,t) \\
&\qquad = {\mathbb P}^{x,I,s}[(X(\tau),R(\tau),C(\tau)) = (dy,J,t)].
\end{split}
\end{align}

We will also make use of more broadly defined kernels $S(x,I,s;dy,J,t)$ by 
relaxing the nonnegativity and unit normalization properties of $T$. Specifically, such a kernel acts on functions $f = f(x,I,s)$ of augmented space according to 
the rule 
\begin{equation*}
Sf(x,I,s) = \int \sum_{J,t} S(x,I,s;dy,J,t)f(y,J,t).
\end{equation*}

We define a projector $P$ on functions $f = f(x,I,s)$ of augmented states, that is, a 
mapping satisfying $P^2 = P$, by
\begin{equation}\label{eq:projector}
Pf(x,I,s) = \int \eta_I(dz)f(z,I,\tau_I).
\end{equation}

\section{Principal and orthogonal dynamics, and Markovian case}\label{sec:dynamics}

The Mori-Zwanzig theory is 
characterized by a {\em principal} 
and {\em orthogonal} dynamics. 
The principal dynamics is driven 
by $PT$, defined by
\begin{equation*}
PT(x,I,s;dy,J,t) = \int \eta_I(dz)T(z,I,\tau_I;dy,J,t).
\end{equation*}
The orthogonal dynamics is driven by $QT$, 
where $Q = \textup{Id} - P$ and $\textup{Id}$ is the identity operator; 
that is, $QT = T - PT$. 

We consider a special {\em Markovian case}, 
in which the underlying dynamics instantaneously 
reaches the QSD in whatever macrostate it 
resides in, with associated decorrelation times $\tau_I = 0$ for all $I$. In this case, $T = PT$, so the orthogonal dynamics vanish, $QT= 0$, and 
all but one of the 
memory kernels is zero; 
see Appendix~\ref{sec:appendix_MZ}.

\section{Derivation of the Mori Zwanzig equation}\label{sec:appendix_MZ}

The following lemma applies to any transition 
kernel $T$ and projector $P$, although 
we have in mind the Markov kernel $T$ in~\eqref{eq:T} and the projector $P$ 
in~\eqref{eq:projector}. 

\begin{lemma}\label{lem:gen_MZ}
For any projector $P$ and its complementary projector $Q = \textup{Id} - P$, where $Id$ is the identity mapping, we have 
\begin{equation}\label{eq:general_MZ}
P T^{n} = \sum_{m=1}^{n}K(m)P T^{n-m}  +  F(n),
\end{equation}
where $K(n) = PT(QT)^{n-1}$ and $F(n) = PT(QT)^{n-1}Q$.
\end{lemma}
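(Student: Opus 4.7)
The plan is a short induction on $n$, built entirely on the algebraic identities $P+Q=\textup{Id}$, associativity of operator composition, and the definitions $K(n)=PT(QT)^{n-1}$ and $F(n)=PT(QT)^{n-1}Q$. No properties of $T$ or $P$ beyond these are needed, which matches the generality of the lemma as stated: it is not tied to the specific Markov kernel~\eqref{eq:T} or projector~\eqref{eq:projector}.

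For the base case $n=1$, one simply observes that
$$K(1)\,P+F(1)=PT\cdot P+PT\cdot Q=PT(P+Q)=PT=PT^{1}.$$
For the inductive step, assuming the formula at level $n-1$, I would right-multiply the inductive hypothesis by $T$; by associativity this turns $PT^{n-1}\cdot T$ into $PT^{n}$ and each $PT^{n-1-m}\cdot T$ into $PT^{n-m}$, giving
$$PT^{n}=\sum_{m=1}^{n-1}K(m)\,PT^{n-m}+F(n-1)\,T.$$
The remaining task is to identify $F(n-1)\,T$ with the missing $m=n$ summand together with the updated error term $F(n)$.

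For that last step I would unwind the definition as $F(n-1)\,T=PT(QT)^{n-2}Q\cdot T=PT(QT)^{n-1}$, and then insert $\textup{Id}=P+Q$ on the right to obtain
$$PT(QT)^{n-1}=PT(QT)^{n-1}P+PT(QT)^{n-1}Q=K(n)\,P+F(n).$$
Combining with the previous display yields the formula at level $n$ and closes the induction. The main ``obstacle'' is purely bookkeeping: reading composition right to left, placing the insertion $\textup{Id}=P+Q$ on the right of $(QT)^{n-1}$ rather than on the left (where $PQ=0$ would collapse the expression to zero), and remembering that $PT^{0}=P$, so the $m=n$ summand contributes $K(n)P$ rather than $K(n)$. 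No analytic input is required.
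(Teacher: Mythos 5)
Your proof is correct. You run a direct induction on the target identity, establishing the base case $n=1$ by writing $K(1)P+F(1)=PT(P+Q)=PT$, then right-multiplying the level-$(n-1)$ identity by $T$ and splitting $F(n-1)T=PT(QT)^{n-1}$ into $K(n)P+F(n)$ via $\textup{Id}=P+Q$ inserted on the right. The paper instead starts from the pair of coupled recursions
\begin{align*}
P T^{n+1} &= P T P T^n + P T Q T^n,\\
Q T^{n+1} &= Q T P T^n + Q T Q T^n,
\end{align*}
uses induction only on the second to obtain the closed form
\begin{equation*}
Q T^n = \sum_{m=1}^{n} (Q T)^{m}P T^{n-m} + (Q T)^n Q,
\end{equation*}
and then substitutes this into the first. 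The underlying algebra is identical ($P+Q=\textup{Id}$ and associativity), so the two routes are comparable in length and difficulty. What the paper's route buys is the explicit intermediate formula for the orthogonal dynamics $QT^n$, which is the object of interest in Mori--Zwanzig theory and makes the principal/orthogonal split manifest; what your route buys is a single, self-contained induction with no auxiliary identity to establish. Both are valid; the difference is organizational, not substantive.
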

\begin{proof}
Start with the self-evident equations
\begin{align}
    P T^{n+1} &= P T P T^n  + P T Q T^n \label{MZeq1} \\
Q T^{n+1} &= Q T P T^n  + Q T Q T^n. \label{MZeq2}
\end{align}
Using induction in~\eqref{MZeq2},
\begin{equation*}
    Q T^n  = \sum_{m=1}^{n} (Q T)^{m}P T^{n-m}  + (Q T)^n Q .
\end{equation*}
Plugging this back into~\eqref{MZeq1} yields the result.\end{proof}

Below, we will make use of functions $\chi_J$ defined by 
\begin{equation*}
\chi_J(x,I,s) = \delta_{I=J}.
\end{equation*}

\begin{theorem}[Exactness of MZ equation]\label{thm:MZeqn}
Let $K(n)$ be as in Lemma~\ref{lem:gen_MZ}, where $P$ is the projector from~\eqref{eq:projector} and $T$ is defined in~\eqref{eq:T}. Define
\begin{equation}\label{eq:matrix_kernel}
    \K_{IJ}(n\tau) := K(n) \chi_J(x,I,s).
\end{equation}
Then, with $\T(t)$ as in~\eqref{eq:defT}, 
\begin{equation}\label{eq:matrixMZ}
    \T(n\tau) = \sum_{m=1}^n \K(m\tau)\T((n-m)\tau).
\end{equation}
\end{theorem}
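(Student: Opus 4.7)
The plan is to apply Lemma~\ref{lem:gen_MZ} to the augmented Markov kernel $T$ of~\eqref{eq:T} and the projector $P$ of~\eqref{eq:projector}, obtaining the operator identity
$$PT^n = \sum_{m=1}^n K(m)\, PT^{n-m} + F(n),$$
and then to evaluate both sides on the indicator $\chi_J$ at an arbitrary augmented state $(x,I,s)$, reading off the $(I,J)$-entry of~\eqref{eq:matrixMZ}.

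The critical observation that makes this clean is that $\chi_J(x,I,s) = \delta_{I=J}$ depends only on the macrostate coordinate, so by inspection of~\eqref{eq:projector} we have $P\chi_J = \chi_J$, hence $Q\chi_J = 0$. This annihilates the forcing term, $F(n)\chi_J = PT(QT)^{n-1}Q\chi_J = 0$, removing the inhomogeneity entirely and leaving a pure convolution. Next I would identify $PT^k\chi_J(x,I,s)$ with $\T_{IJ}(k\tau)$: the integral $\int \eta_I(dz)$ built into $P$ realizes the conditioning $X(s)\sim\eta_I$ appearing in~\eqref{eq:defT}, and the augmented Markov property handles the rest. In particular, $PT^{n-m}\chi_J$, viewed as a function on augmented space, coincides with $\sum_L \T_{LJ}((n-m)\tau)\chi_L$, since the value of this function depends only on the second coordinate.

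Finally, linearity of $K(m)$ together with the definition~\eqref{eq:matrix_kernel} gives
$$K(m)(PT^{n-m}\chi_J)(x,I,s) = \sum_L \K_{IL}(m\tau)\, \T_{LJ}((n-m)\tau) = [\K(m\tau)\T((n-m)\tau)]_{IJ},$$
and summing over $m$ completes the proof. The main obstacle, which is really bookkeeping rather than a deep difficulty, is confirming the identification $PT^k\chi_J(x,I,s) = \T_{IJ}(k\tau)$: one must verify that running the augmented chain starting from $(z,I,\tau_I)$ with $z\sim\eta_I$ produces the same distribution for $R(k\tau)$ as the conditional probability in~\eqref{eq:defT}. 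This is where the specific choice of projector, freezing the decorrelation clock at $\tau_I$ and averaging against $\eta_I$, proves essential, and it is the one step where one must be careful with the augmented state space rather than simply invoking linearity.
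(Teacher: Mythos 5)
Your proposal is correct and follows essentially the same route as the paper's proof: apply Lemma~\ref{lem:gen_MZ}, evaluate on $\chi_J$, use $P\chi_J = \chi_J$ (hence $Q\chi_J = 0$) to kill the forcing term, identify $PT^k\chi_J(x,I,s)$ with $\T_{IJ}(k\tau)$, and exploit the fact that $PT^{n-m}\chi_J$ depends only on the macrostate coordinate. Your re-expression $PT^{n-m}\chi_J = \sum_L \T_{LJ}((n-m)\tau)\chi_L$ followed by linearity of $K(m)$ is a slightly tidier packaging of the same kernel-integration step the paper carries out explicitly, but the underlying argument is identical.
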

\begin{proof}
Multiply~\eqref{eq:general_MZ} on the right by $\chi_J(x,I,s)$. Note that $P\chi_J = \chi_J$, so that $Q\chi_J =0$ and $F(n)\chi_J(x,I,s) = 0$. Thus,
\begin{equation}\label{eq:multiplied}
  P T^{n}\chi_J(x,I,s) = \sum_{m=1}^{n}K(m)P T^{n-m}\chi_J(x,I,s).
\end{equation}

Recalling $\T(t)$ defined in~\eqref{eq:defT}, we compute
\begin{align*}
PT^n\chi_J(x,I,s) &= \int \eta_I(dx)T^n\chi_J(x,I,\tau_I) \\
&= \int \eta_I(dx)\E^{x,I,\tau_I}[\chi_J(X(n\tau),R(n\tau),C(n\tau))] \\
&= \int \eta_I(dx) {\mathbb P}^{x,I,\tau_I}[R(n\tau) = J] \\
&= \T_{IJ}(n\tau).
\end{align*}
Below, write 
$S_m = T(QT)^{m-1}$, 
and note that $PT^{n-m}\chi_J(y,L,t)$ does not 
depend on $y$ or $t$. Thus,
\begin{align*}
  &\sum_L  \K_{IL}(m\tau) \T_{LJ}((n-m)\tau) \\
  &= 
  \sum_L K(m)\chi_L(x,I,s) P T^{n-m}\chi_J(y,L,t) \\
  &= \sum_L\int \eta_I(dz)\left[\int \sum_t S_m(z,I,\tau_I;dy,L,t)\right]PT^{n-m}\chi_J(y,L,t) \\
  &=\int \eta_I(dz)\left[\int \sum_{L,t} S_m(z,I,\tau_I;dy,L,t)PT^{n-m}\chi_J(y,L,t)\right] \\
  &= K(m)PT^{n-m}\chi_J(x,I,s).
\end{align*}
Combining the last two displays with~\eqref{eq:multiplied} gives~\eqref{eq:matrixMZ}. 
\end{proof}

Next, we show that all 
but one of the memory kernels 
vanishes in the case where $R(t)$ 
is Markovian.
\begin{theorem}\label{thm:Markov}
   Suppose that $\tau_I = 0$ for all $I$ and that
\begin{equation*}
    T(x,I,s;dy,J,t) = \int \nu_I(dz)T(z,I,\tau_I;dy,J,t).
\end{equation*}
Then 
$\K(n\tau) = 0$ for $n > 1$.
\end{theorem}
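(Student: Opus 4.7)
The strategy is to show that the hypothesis forces $QT = 0$ as an operator on functions of augmented states, after which the conclusion is immediate from the formula $K(n) = PT(QT)^{n-1}$ given in Lemma~\ref{lem:gen_MZ}.

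First, I would unpack the Markovian hypothesis at the level of operators acting on a test function $f = f(x,I,s)$. Applying $T$ to $f$ gives
\begin{equation*}
Tf(x,I,s) = \int \sum_{J,t} T(x,I,s;dy,J,t)\,f(y,J,t),
\end{equation*}
and substituting the assumed identity $T(x,I,s;dy,J,t) = \int \eta_I(dz)\,T(z,I,\tau_I;dy,J,t)$ (identifying $\nu_I$ with $\eta_I$) and exchanging the sum with the outer integral yields
\begin{equation*}
Tf(x,I,s) = \int \eta_I(dz)\,Tf(z,I,\tau_I) = PTf(x,I,s),
\end{equation*}
using the definition~\eqref{eq:projector} of the projector $P$. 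Thus $T = PT$ as operators, and therefore
\begin{equation*}
QT = (\mathrm{Id} - P)T = T - PT = 0.
\end{equation*}

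Second, I would plug this into the memory-kernel formula. For $n > 1$, we have $K(n) = PT(QT)^{n-1}$, and since $QT = 0$ the factor $(QT)^{n-1}$ vanishes, so $K(n) = 0$ as an operator. Applying both sides to the indicator $\chi_J$ and invoking the definition~\eqref{eq:matrix_kernel} gives $\K_{IJ}(n\tau) = 0$ for every $I,J$, which is the claim.

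The proof is short because the Markovian hypothesis was tailor-made so that one step of $T$ already reproduces the action of $PT$. The only step requiring any care is the operator-level identification $T = PT$: one must be careful that the hypothesis, which is stated for kernels, really does imply equality of operators on arbitrary bounded measurable $f$, not just on some restricted class. I expect this to be straightforward by Fubini once one observes that $\int \sum_{J,t}\eta_I(dz)T(z,I,\tau_I;dy,J,t)\,|f(y,J,t)|$ is finite. After that, the vanishing of all higher kernels is an immediate algebraic consequence. This also dovetails with Theorem~\ref{thm:MZeqn}: in the Markovian case~\eqref{eq:matrixMZ} collapses to $\T(n\tau) = \K(\tau)\T((n-1)\tau)$, which iterates to $\T(n\tau) = \K(\tau)^n$, consistent with the discussion after equation~\eqref{eq:inferT}.
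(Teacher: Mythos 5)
Your proof is correct and follows exactly the same route as the paper: show the hypothesis forces $T = PT$, hence $QT = 0$, and then $K(n) = PT(QT)^{n-1}$ vanishes for $n>1$. You simply spell out the operator-level verification and the identification $\nu_I = \eta_I$ (the paper uses these interchangeably), which the paper leaves implicit.
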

\begin{proof}
    The assumption on $T$ implies that $PT = T$, so $QT = 0$ and 
    the result follows from the formula $$\K_{IJ}(n\tau) = PT(QT)^{n-1}\chi_J(x,I,s).$$
\end{proof}

\section{Minimizing the loss function}\label{sec:appendix_linear}

The gradient of the loss function~\eqref{eq:loss} is 
\begin{align*}
        \nabla_{\K(t)} {\mathcal L}(\K) 
        &= \sum_{r \le t_{max}} \T(r)\T(r-t)^T \\
        &\qquad - \sum_{0 < s \le t_{mem}} \K(s)\sum_{r\le t_{max}} \T(r-s)\T(r-t)^T,
\end{align*}
where by definition $\T(s) = 0$ for $s<0$.

This immediately leads to the linear system reported in~\eqref{eq:linear}.

\providecommand{\noopsort}[1]{}\providecommand{\singleletter}[1]{#1}%

\end{document}